\documentclass[12pt, reqno]{amsart}
\usepackage[utf8]{inputenc}
\usepackage{amsmath}
\usepackage{amsfonts}
\usepackage{amssymb}
\usepackage{amsthm}
\usepackage{bm}
\usepackage{bbm}
\usepackage{xcolor}
\usepackage{mathtools}
\usepackage{url}
\usepackage{hyperref}
\theoremstyle{plain}
\newtheorem{theorem}{Theorem}[section]
\newtheorem{lemma}[theorem]{Lemma}

\theoremstyle{definition}

\theoremstyle{remark}
\newtheorem{remark}[theorem]{Remark}

\numberwithin{equation}{section}

\title{Utilizing Smoothing Techniques to Bound $|\zeta(1+it)|$}

\author{Andrew Christensen}
\email{andrewchristensenj@gmail.com}
\author{Kyle Pratt}
\email{kyle.pratt@mathematics.byu.edu}
\address{Department of Mathematics, Brigham Young University, Provo, UT 84602, USA}

\subjclass[2020]{11M06, 11Y35}
\keywords{Riemann zeta function, upper bound, 1-line, numerical integration, smoothing}

\allowdisplaybreaks

\begin{document}
\date{}

\begin{abstract}
We demonstrate an improved explicit upper bound of $|\zeta(1+it)|$ for $3 \leq t \leq 10^9$ using smoothing techniques. Our method sharpens previous bounds relying on the Riemann--Siegel formula and the triangle inequality. In particular, we prove that for $t\geq 3$, 
\begin{align*}
    |\zeta(1+it)| \leq \frac{1}{2}\log t + 1.57 
\end{align*}
and for $t \geq 10^8$,
\[
|\zeta(1+it)|\leq \frac{1}{3}\log t + 2\log \log t -1.16 .
\]
\end{abstract}

\maketitle

\section{Introduction}

Studying the behavior of the Riemann zeta function on the one-line has been of particular interest for a number of years, largely due to its utility in expanding the zero-free regions of $\zeta(s)$ \cite{FORD_2002}. It is often useful to have quality explicit upper bounds on $|\zeta(1+it)|$, particularly when probing zero-free regions of the zeta function by computational means. Several such bounds have been proved, including in \cite{Patel2020} and \cite{Hiary2025}. A vast majority of these bounds rely heavily on the Riemann--Siegel formula or similar approximate functional equations of the zeta function where the value of $\zeta(1+it)$ is approximated by
\begin{equation*}
\zeta(1+it) \approx \sum_{n=1}^{N(t)}\frac 1{n^{1+it}}
\end{equation*}
for some  $N(t)$. 

For large $t$, previous researchers have applied exponential sum techniques to the Riemann--Siegel formula to achieve quality explicit upper bounds of $|\zeta(1+it)|$. In \cite{Patel2020}, Patel proved
\begin{equation}\label{eq: Patel bound}
|\zeta(1+it)|\leq \min \left( \log t, \frac 12 \log t + 1.93, \frac15\log t + 44.02  \right)
\end{equation}
for $t\geq 3$. The bounds $\log t$ and $\frac 12 \log t + 1.93$ are essentially trivial bounds arising from applying the triangle inequality to the Riemann--Siegel formula. The bound $\frac15\log t + 44.02 $ arises via exponential sum estimates and is better than the previous bounds only when $t$ is sufficiently large.
 
Hiary, Leong, and Yang \cite{Hiary2025} refined the work of Patel and showed
\begin{equation}\label{eq: Hiary bound}
|\zeta(1+it)|\leq 1.731 \frac{\log t}{\log\log t}
\end{equation}
for $t\geq 3$. Hiary, Leong, and Yang use further exponential sum arguments to improve \eqref{eq: Patel bound} for large $t$, but for $t \leq e^{16}\approx 8.88 \cdot 10^6$ they use Patel's bound $|\zeta(1+it)| \leq \frac 12 \log t + 1.93$.

Working with smoothed sums can sometimes yield improved quantitative or qualitative results. One noteworthy example of this philosophy is Helfgott's proof of the ternary Goldbach conjecture (see \cite{Helfgott}), in which smoothed sums appear throughout the argument. We show that utilizing smoothing techniques can improve the ``trivial'' bounds of $|\zeta(1+it)|$ for small $t$. In particular, we obtain improved upper bounds for $t \leq 10^9$.
We essentially work with a smooth representation
\begin{equation}\label{eq: smoothing form}
\zeta(1+it) \approx \sum_{n=1}^\infty{\frac{1}{n^{1+it}}\exp{\left(-\frac{n}{X}\right)}}
\end{equation}
for suitable $X$. The following is our main result.
\begin{theorem}\label{thm: main}
    If $t\geq 3$, then
    \begin{equation}
        |\zeta(1+it)| \leq \frac{1}{2}\log t + 1.57.
    \end{equation}
    Furthermore, if $t \geq 10^8$, then
    \begin{equation}
        |\zeta(1+it)| \leq \frac{1}{3}\log t + 2\log \log t -1.16.
    \end{equation}
\end{theorem}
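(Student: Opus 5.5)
We would begin from the Mellin--Barnes representation of the smoothed sum \eqref{eq: smoothing form}. For $s=1+it$ and $X>0$,
\[
\sum_{n\ge1} \frac{e^{-n/X}}{n^{s}} = \frac{1}{2\pi i}\int_{(c)} \zeta(s+w)\,\Gamma(w)\,X^{w}\,dw, \qquad c>0 .
\]
We shift the contour to $\operatorname{Re} w=-1/2$, so that $\operatorname{Re}(s+w)=1/2$. Along the way we pick up two residues. The pole of $\Gamma$ at $w=0$ contributes $\zeta(1+it)$. The pole of $\zeta$ at $w=-it$ contributes $\Gamma(-it)X^{-it}$, which is of size $e^{-\pi t/2}$ and hence negligible for $t\ge3$. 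This gives
\[
\zeta(1+it)=\sum_{n\ge1}\frac{e^{-n/X}}{n^{1+it}} - \Gamma(-it)X^{-it} - \frac{1}{2\pi}\int_{-\infty}^{\infty}\zeta\!\left(\tfrac12+i(t+v)\right)\Gamma\!\left(-\tfrac12+iv\right)X^{-1/2+iv}\,dv .
\]

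We would bound the main sum trivially by
\[
\sum_{n} \frac{e^{-n/X}}{n} = -\log\bigl(1-e^{-1/X}\bigr) = \log X + O(1/X).
\]
More precisely this equals $\log X + \tfrac{1}{2X}+\dots$, and the $\gamma$-type constant cancels favorably compared with the sharp cutoff, which is the whole gain. For the error integral we would insert an explicit bound for zeta on the half-line. One option is a convexity or Lindelöf-type estimate such as $|\zeta(\tfrac12+iu)|\le 0.618\,u^{1/6}\log u$ (Hiary) or $\le 0.77\,u^{1/6}\log u$ for large $u$; for moderate $u$ we would use $|\zeta(\tfrac12+iu)|\le 1.461\,u^{1/4}$ or the simpler $u^{1/4}$-type bounds. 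Since $|\Gamma(-\tfrac12+iv)|$ decays like $e^{-\pi|v|/2}$, the integral is essentially $C\,X^{-1/2}\,|\zeta(\tfrac12+it)|$-size. It is therefore bounded by roughly $c_1 X^{-1/2} t^{\theta}L$, with $\theta=1/4$ or $1/6$. The contributions of $|v|$ comparable to $t$ would be controlled by the Gamma decay, together with polynomial bounds for zeta near small heights.

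Optimizing, we take $X \asymp t^{2\theta}$ up to a logarithmic factor. With $\theta=1/4$ this gives $\log X=\tfrac12\log t+O(1)$, which yields the first bound $\tfrac12\log t+1.57$ for large enough $t$. With $\theta=1/6$ and the $\log u$ factor, we set $X=t^{1/3}(\log t)^2\cdot\kappa$. The sum then contributes $\tfrac13\log t+2\log\log t+\log\kappa$, the error is $O(1/\sqrt{\kappa})$, and we choose $\kappa$ to minimize the total. This gives the second bound, valid from $10^8$ on, where the subconvexity constant is explicit. We would be careful that the $\theta=1/6$ bound holds for all $t+v$ in the range where the Gamma weight is non-negligible. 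This forces splitting the $v$-integral into $|v|\le t/2$, where the explicit $t^{1/6}$ bound applies, and the tail, which is absorbed by exponential decay.

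For small $t$ (from $3$ up to wherever the asymptotic argument makes the first bound valid, presumably around $10^9$ as the abstract suggests), the plan is computational. One option is to evaluate the smoothed sum and the integral numerically on a fine grid with a Lipschitz bound on the derivative of $\zeta(1+it)$. The more direct option is to compute $|\zeta(1+it)|$ itself rigorously, using interval arithmetic with Euler--Maclaurin for small $t$. The \textbf{main obstacle} is the intermediate range $10^{3}\lesssim t\lesssim 10^9$. Here direct computation is expensive, and the analytic error term with the $t^{1/4}$ bound is not yet small enough. We would first try to tighten the integral estimate there by using the actual numerically bounded values of $|\zeta(\tfrac12+iu)|$ only through an explicit mean value, namely Cauchy--Schwarz against $|\Gamma|$ together with an explicit second-moment bound. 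We would also choose $X$ piecewise in $t$ and verify the resulting inequality by a finite check over dyadic blocks.
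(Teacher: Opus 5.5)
Your analytic framework is the paper's. It uses the Mellin--Barnes identity shifted to $\operatorname{Re} w=-\tfrac12$ with residues $\zeta(1+it)$ and $X^{-it}\Gamma(-it)$, the evaluation $-\log(1-e^{-1/X})=\log X+\frac{1}{2X}+\cdots$, explicit half-line bounds inside the integral, and $X=\kappa t^{1/3}\log^2 t$ with $\sqrt{\kappa}=c_7/(4\pi)$ for the second inequality.

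\textbf{The gap: the first inequality for large $t$.} You claim the $\theta=\tfrac14$ route gives $\tfrac12\log t+1.57$ ``for large enough $t$'', and that the difficulty lies at small and intermediate $t$. It is the reverse. Use the bound $|\zeta(\tfrac12+iu)|\le 4(u/2\pi)^{1/4}-2.08$ ($u\ge 200$) and the optimal $X=c_4^2t^{1/2}/(4\pi)^2$. Here $c_4\approx 4.0315\cdot 4(2\pi)^{-1/4}\approx 10.19$ and $c_6\approx 4.0314\cdot 2.08\approx 8.39$, with $4.0315\approx\int_{\mathbb R}|\Gamma(-\frac12+iu)|\,du$. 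The additive constant is then
\[
2+2\log\frac{c_4}{4\pi}-\frac{2c_6}{c_4}\,t^{-1/4}+O\bigl(t^{-1/2}\bigr).
\]
This constant increases with $t$, because the saving from the $-2.08$ decays. It tends to $2+2\log(c_4/4\pi)\approx 1.58>1.57$. So this route proves the first inequality only on a bounded range (the paper uses it for $210\le t\le 5\cdot10^8$), and it already fails before $t=10^9$.

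The missing step is that for large $t$ the first inequality follows from the second. Indeed, $\tfrac13\log t+2\log\log t-1.16\le\tfrac12\log t+1.57$ is equivalent to $2\log\log t\le\tfrac16\log t+2.73$, which holds for all $t\ge 2\cdot10^8$. The $1.461\,u^{1/4}$ bound you list does not rescue the large-$t$ case. By the same computation it would give a limiting constant near $0.48$, far better than the theorem, and you have no source or range of validity for it.

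\textbf{Consequences of the reversal.} Your ``main obstacle'' at $10^3\lesssim t\lesssim 10^9$ does not exist. The proposed fixes (Cauchy--Schwarz against an explicit second moment, piecewise $X$, dyadic checks or direct computation up to $10^9$) are unnecessary, and direct computation over that range would be prohibitively expensive anyway. For $t\ge 210$, two pieces are negligible by Gamma decay: the part of the $v$-integral with $|t+v|\le 200$, where no half-line bound is used, and the term $|\Gamma(-it)|$. Since the constant above is increasing, the $t^{1/4}$ argument gives $\tfrac12\log t+1.57$ on all of $[210,5\cdot10^8]$ by checking the right endpoint. Only $3\le t\le 210$ needs direct rigorous computation.

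\textbf{A smaller point.} The second inequality starts at $10^8$ not because the subconvexity constant becomes explicit there; $0.618\,t^{1/6}\log t$ already holds for $t\ge3$. It starts there because the optimized constant $2+2\log(c_7/4\pi)+2c_8/(c_7t^{1/6})+\cdots$ is decreasing and first falls below $-1.16$ near $t=10^8$. Here $c_8\approx 3.3032\cdot 0.618$ comes from the $\log t$ term.
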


Theorem \ref{thm: main} improves upon \eqref{eq: Patel bound} and \eqref{eq: Hiary bound} for $t\leq 10^9$.  While both  bounds rely heavily on smoothing, the bound
\[
|\zeta(1+it)|\leq  \frac{1}{3}\log t + 2\log \log t -1.16
\]
also depends in part on exponential sum techniques. However, the bound
\[
|\zeta(1+it)|\leq\frac{1}{2}\log t + 1.57
\]
relies only on a trivial bound for $|\zeta(1/2+it)|$. This bound, without using exponential sums, is an improvement of \eqref{eq: Patel bound} and \eqref{eq: Hiary bound} for  $t\leq 4 \cdot 10^8$.

In this paper, we focus on smoothings of the form seen in \eqref{eq: smoothing form}, but it would be interesting to consider the effects of different smoothing functions. It is possible that using exponential sums and smoothing techniques in tandem yields even better results. It would also be worthwhile to see if using smoothings could sharpen bounds on $|\zeta^{-1}(1+it)|$ or $|\frac{\zeta'}{\zeta}(1+it)|$ in ranges of interest.

In Section \ref{sec: section 2} we prove a smoothed representation of $\zeta(1+it)$ of the form seen in \eqref{eq: smoothing form} and show that that smoothed sum can be bounded by $\log X$ plus some small error. In Section \ref{sec: section 3}, we twice bound our main error term, once optimizing for smaller $t$ and once for larger $t$. Finally, in Section \ref{sec: proof of theorem} we prove Theorem \ref{thm: main}.

Some of our work relies on computer calculation. All the code used in this paper is available at our ``Utilizing Smoothing Techniques to Bound $\zeta(1+it)$'' GitHub repository \cite{ChristensenPratt2026}.

\section{A Smoothed Representation of $\zeta(1+it)$} \label{sec: section 2}

We begin by proving a smoothed approximation of $\zeta(1+it)$ of the form given in \eqref{eq: smoothing form}.

\begin{lemma}\label{lem: main bound}
    For all $t \in \mathbb R\setminus\{0\}$ and $X>0$,
    \begin{align} \label{eq: main bound}
        |\zeta(1+it)|\leq&\sum_{n=1}^\infty{\frac{1}{n}\exp{\left(-\frac{n}{X}\right)}} +\left|\Gamma(-it)\right| \\ &+\frac{X^{-1/2}}{2\pi }\int_{- \infty}^{\infty}\left|\Gamma(-1/2+iu)\right| \cdot \left|  \zeta(1/2+i(t+u))\right|du. \notag
    \end{align}
\end{lemma}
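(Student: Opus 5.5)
We start from the Mellin inversion formula $e^{-y}=\frac{1}{2\pi i}\int_{(c)}\Gamma(w)y^{-w}\,dw$, valid for $c>0$, $y>0$. Put $y=n/X$, multiply by $n^{-1-it}$ and sum over $n$. For $c=1$ the double sum and integral converge absolutely, since $\sum n^{-2}$ converges and $\Gamma$ decays exponentially on vertical lines. This gives
\[
S(X,t):=\sum_{n=1}^\infty \frac{1}{n^{1+it}}\exp\left(-\frac nX\right)=\frac{1}{2\pi i}\int_{(1)}\Gamma(w)\,\zeta(1+it+w)\,X^{w}\,dw .
\]
Next we shift the contour from $\mathrm{Re}(w)=1$ to $\mathrm{Re}(w)=-1/2$. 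In the strip $-1/2\le \mathrm{Re}(w)\le 1$ the integrand has two simple poles. The pole of $\Gamma$ at $w=0$ has residue $\zeta(1+it)$; it is distinct from the other pole because $t\neq 0$. The pole of $\zeta$ at $w=-it$ has residue $\Gamma(-it)X^{-it}$. Hence
\[
S(X,t)=\zeta(1+it)+\Gamma(-it)X^{-it}+\frac{1}{2\pi i}\int_{(-1/2)}\Gamma(w)\zeta(1+it+w)X^{w}\,dw .
\]

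To justify the shift we must show that the horizontal segments at height $\pm T$ contribute nothing as $T\to\infty$. On $-1/2\le\sigma\le 1$, the function $\zeta(1+it+\sigma+iT)$ grows at most polynomially in $T$ by the standard convexity (Phragmén--Lindelöf) bound. By Stirling's formula, $\Gamma(\sigma+iT)$ decays like $e^{-\pi T/2}$ uniformly for $\sigma$ in this range. So the horizontal integrals tend to $0$. This is the only genuinely analytic step, but it is routine. The main care needed is to check that no other poles lie in the strip: $\Gamma$ has its next pole at $w=-1$, which lies outside the strip.

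Finally, we solve for $\zeta(1+it)$ and apply the triangle inequality. Note that $|X^{-it}|=1$. Write $w=-1/2+iu$, so that $|X^{w}|=X^{-1/2}$ and $|dw|=du$. We also have $\left|\sum n^{-1-it}e^{-n/X}\right|\le\sum n^{-1}e^{-n/X}$. The remaining integral is bounded by
\[
\frac{X^{-1/2}}{2\pi}\int_{-\infty}^{\infty}|\Gamma(-1/2+iu)|\,|\zeta(1/2+i(t+u))|\,du,
\]
which is finite because of the exponential decay of $\Gamma$. Combining these bounds gives \eqref{eq: main bound}.
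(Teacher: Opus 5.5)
Your proposal is correct and follows the paper's proof step for step. Both arguments use Mellin inversion of $\Gamma$, interchange sum and integral by absolute convergence, and shift the contour to $\operatorname{Re}(w)=-1/2$, picking up the residues $\zeta(1+it)$ at $w=0$ and $\Gamma(-it)X^{-it}$ at $w=-it$, before applying the triangle inequality with $w=-1/2+iu$. The only differences are cosmetic (you start on $\operatorname{Re}(w)=1$ instead of $2$), plus your explicit justification of the vanishing horizontal segments via the convexity bound and Stirling, which the paper leaves implicit.
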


\begin{proof}
    Since the inverse Mellin transform of $\Gamma(s)$ for $\operatorname{Re}(s)>0$ is $e^{-y}$ \cite[2.5.1]{Oberhettinger_1974}, we can write
    \begin{equation*}
    \frac 1{2\pi i} \int_{2-i\infty}^{2+i\infty}y^s \Gamma(s) \ ds  = \exp\left(-\frac 1y\right).
    \end{equation*}
    Taking $y=X/n$ and summing over $n$, we can write
    \begin{align*}
        \sum_{n=1}^\infty \frac{1}{n^{1+it}}\exp\left(-\frac nX\right) & = \sum_{n=1}^\infty \frac{1}{n^{1+it}}\frac 1{2\pi i} \int_{2-i\infty}^{2+i\infty}\left(\frac X n\right)^s \Gamma(s) \ ds \\
        &= \frac{1}{2\pi i}  \int_{2-i\infty}^{2+i\infty} X^s\Gamma(s) \sum_{n=1}^\infty \frac{1}{n^{1+it+s}}ds \\
        &=\frac{1}{2\pi i}  \int_{2-i\infty}^{2+i\infty} X^s\Gamma(s) \zeta(1+it+s) \ ds,
    \end{align*}
    where interchanging the order of summation and integration is justified by absolute convergence.
    
    We shift the line of integration to $\text{Re}(s) = -\frac{1}{2}$, picking up contributions from poles at $s=0$ and $s=-it$. Hence
    \begin{align*}
         \sum_{n=1}^\infty& \frac{1}{n^{1+it}}\exp\left(-\frac nX\right)-\frac{1}{2\pi i}  \int_{-1/2-i\infty}^{-1/2+i\infty} X^s\Gamma(s) \zeta(1+it+s) \ ds =\zeta(1+it) + X^{-it}\Gamma(-it),
    \end{align*}
    and rearranging gives
    \begin{equation*}
        \zeta(1+it) = \sum_{n=1}^\infty \frac{1}{n^{1+it}}\exp\left(-\frac nX\right)-X^{-it}\Gamma(-it)-\frac{1}{2\pi i}  \int_{-1/2-i\infty}^{-1/2+i\infty} X^s\Gamma(s) \zeta(1+it+s) \ ds.
    \end{equation*}
    Applying the triangle inequality yields
    \begin{equation*}
        |\zeta(1+it)|\leq\sum_{n=1}^\infty{\frac{1}{n}\exp{\left(-\frac{n}{X}\right)}} +\left|\Gamma(-it)\right| + \frac{1}{2\pi }\int_{-1/2- i \infty}^{-1/2+ i \infty}\left|{X^s \Gamma(s)} \zeta(1+it+s)\right|ds.
    \end{equation*}
    We perform a change of variables in the integral to obtain
    \begin{align*}
    \frac{1}{2\pi }\int_{-1/2- i \infty}^{-1/2+ i \infty}\left|{X^s \Gamma(s)} \zeta(1+it+s)\right|ds &= \frac{1}{2\pi }\int_{- \infty}^{\infty}\left|{X^{-1/2+iu} \Gamma(-1/2+iu)}  \zeta(1/2+i(t+u))\right|du \\
    &= \frac{X^{-1/2}}{2\pi }\int_{- \infty}^{\infty}\left|\Gamma(-1/2+iu)\right| \cdot \left|  \zeta(1/2+i(t+u))\right|du,
    \end{align*}
    which gives the desired bound.
\end{proof}

We now proceed by bounding the terms on the right-hand side of Lemma \ref{lem: main bound} individually. We begin with the summation. 

\begin{lemma}\label{lem: sum term bound}
    If $X>0$, $\xi \coloneqq e-2.5$, and
    \[
    c_0 \coloneqq \frac{12 \xi -1}{24}, \ \ c_1 \coloneqq \frac{60 \xi^2-1}{120},
    \]
    then
    \begin{equation} \label{eq:Bound on the sum term}
         \sum_{n=1}^\infty \frac{1}{n} \exp\left( -\frac{n}{X} \right) \leq \log X +\frac{1}{2X}-\frac{1}{24X^2} + \frac{c_0}{X^3} + \frac{c_1}{X^4}.
    \end{equation}
\end{lemma}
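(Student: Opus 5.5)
The plan is to evaluate the sum in closed form and then estimate the resulting elementary function, with no appeal to Euler--Maclaurin. Put $h = 1/X > 0$. Since $0 < e^{-h} < 1$, the series $\sum_{n\ge1} x^n/n = -\log(1-x)$ gives exactly
\[
\sum_{n=1}^\infty \frac{1}{n}\exp\left(-\frac nX\right) = -\log\left(1-e^{-h}\right).
\]

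Next I factor out the symmetric part. With $y = h/2$ we have $1-e^{-h} = e^{-y}\,(e^{y}-e^{-y}) = 2e^{-y}\sinh y$. Therefore
\[
-\log\left(1-e^{-h}\right) = -\log h + \frac h2 - \log\left(\frac{\sinh y}{y}\right) = \log X + \frac{1}{2X} - \log\left(\frac{\sinh y}{y}\right).
\]
So the lemma reduces to a lower bound for $\log(\sinh y/y)$ when $y>0$.

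For that lower bound I would use two elementary inequalities. The power series of $\sinh y/y$ has nonnegative coefficients, so $\sinh y/y \ge 1 + u$ with $u = y^2/6 \ge 0$. Also $\log(1+u) \ge u - u^2/2$ for $u \ge 0$, because the difference vanishes at $u=0$ and has derivative $u^2/(1+u) \ge 0$. Combining these gives
\[
\log\left(\frac{\sinh y}{y}\right) \ge \frac{y^2}{6} - \frac{y^4}{72} = \frac{h^2}{24} - \frac{h^4}{1152}.
\]
Substituting back yields
\[
\sum_{n=1}^\infty \frac{1}{n}\exp\left(-\frac nX\right) \le \log X + \frac{1}{2X} - \frac{1}{24X^2} + \frac{1}{1152X^4}.
\]

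Finally I would check the constants. With $\xi = e-2.5 \approx 0.21828$, we get $c_0 = (12\xi-1)/24 \approx 0.0675 > 0$, so $c_0/X^3 \ge 0$. We also get $c_1 = (60\xi^2-1)/120 \approx 0.0155 > 1/1152 \approx 0.00087$. Hence $1/(1152X^4) \le c_0/X^3 + c_1/X^4$ for every $X>0$, which is \eqref{eq:Bound on the sum term}. The argument actually proves a slightly stronger bound than stated.

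The only step needing care is the sign bookkeeping in the inequality $\log(1+u) \ge u - u^2/2$ and in $\sinh y / y \ge 1 + y^2/6$, since both must hold uniformly for all $y>0$. Both do, as shown above. I therefore expect no case split between small and large $X$. If the authors' constants came from a Taylor remainder with $\xi$ bounding $\sum_{k\ge3}1/k!$, their route would differ, but the closed form makes that unnecessary.
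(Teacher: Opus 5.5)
Your proof is correct, and it takes a different route from the paper. The paper writes $-\log(1-e^{-1/X}) = \log X + \frac{1}{X} - \log\bigl(X(e^{1/X}-1)\bigr)$ and sets $X(e^{1/X}-1) = 1+B$, where $B = \frac{1}{2X} + C$ and $C = \sum_{k\ge 2}\frac{1}{(k+1)!\,X^k}$. It then uses $-\log(1+B)\le -B + \frac{B^2}{2}$ and bounds the tail crudely by $C \le \xi/X^2$, with $\xi = e-2.5 = \sum_{k\ge3}1/k!$. The cross terms $\frac{C}{2X}$ and $\frac{C^2}{2}$ are exactly what produce $c_0/X^3$ and $c_1/X^4$ in the statement.

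Your factorization $1-e^{-h} = 2e^{-h/2}\sinh(h/2)$ extracts the $\frac{1}{2X}$ term exactly and leaves the even function $\log(\sinh y/y)$. So no odd-order correction arises, and the single inequality $\log(1+u)\ge u-u^2/2$ finishes the job with the sharper remainder $\frac{1}{1152X^4}$.

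Your argument also covers every $X>0$, as the lemma claims. The paper's step $C\le \xi/X^2$ uses $X^{-k}\le X^{-2}$ for $k\ge 2$, which requires $X\ge 1$; it fails, for instance, at $X=1/2$, where $C\approx 1.19 > 4\xi\approx 0.87$. This is harmless for Section 4, since $X>9$ in the ranges $t\ge 210$ and $t\ge 10^8$ where the two choices of $X$ are used. Still, your proof is the one that establishes the lemma as stated.

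The paper's expansion is more mechanical, and its constants match the statement verbatim. Your closing comparison ($c_0>0$ and $c_1>1/1152$) recovers the stated form at no cost.
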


\begin{proof}
    Since $X>0$, it follows that $|\exp(-1/X)|<1$.  Therefore,
    \begin{align}\label{eq:Taylor expansion of sum}
        \sum_{n=1}^\infty{\frac{1}{n}\exp{\left(-\frac{n}{X}\right)}} &=-\log \left( 1-e^{-1/X} \right) = \log X - \log\left(X\left(e^{1/X}-1\right)\right)+\frac{1}{X}.
    \end{align}
    We write $X\left( e^{1/X}-1 \right)$ as a Taylor series to obtain
    \begin{align*}
    -\log\left( X\left( e^{1/X}-1 \right) \right) &= -\log\left( 1+ \frac{1}{2X} + \frac{1}{6X^2} + \ldots \right).
    \end{align*}
    For ease of notation, we define 
    \begin{align*}
    B&\coloneqq\sum_{k=1}^\infty \frac{1}{(k+1)! X^k} =\frac{1}{2X} + \frac{1}{6X^2} + \frac{1}{24X^3} + \ldots,\\
    C&\coloneqq\sum_{k=2}^\infty \frac{1}{(k+1)! X^k} = \frac{1}{6X^2} + \frac{1}{24X^3}  + \frac{1}{120X^4}  + \ldots,
    \end{align*}
    so that
    \[
    -\log\left( X\left( e^{1/X}-1 \right) \right) = -\log\left( 1 + B  \right) = -\log\left( 1+ \frac{1}{2X} +C \right).
    \]
    Furthermore, note that we can write
    \[
    B= \frac{1}{2X} + C, \ \ \ B^2 = \frac{1}{4 X^2} + \frac{C}{X} + C^2.
    \]
    Expanding as a Taylor series gives
    \begin{align*}
    -\log\left( X\left( e^{1/X}-1 \right) \right)&=-\log\left( 1 + B  \right)= -B+\frac{B^2}{2} - \frac{B^3}{3} + \ldots\\
    &\leq -B+\frac{B^2}{2}=-B +\frac{1}{8 X^2} + \frac{C}{2X} + \frac{C^2}{2}.
    \end{align*}
    We can bound $C$ by
    \[
    C=\sum_{k=2}^\infty \frac{1}{(k+1)! X^k} \leq \frac{1}{X^2}\sum_{k=3}^\infty \frac{1}{k!} = \frac{e-2.5}{X^2} =\frac{\xi}{X^2}.
    \]
    Applying this bound, we find
    \begin{align*}
         -\log\left( X\left( e^{1/X}-1 \right) \right) &\leq -\frac{1}{2X} - \frac{1}{6X^2} - \frac{1}{24X^3} - \frac{1}{120X^4} -\ldots + \frac{1}{8 X^2} + \frac{\xi}{2X^3} + \frac{\xi^2}{2 X^4} \\
        &\leq -\frac{1}{2X}-\frac{1}{24X^2}  + \frac{12\xi - 1}{24 X^3} + \frac{60 \xi^2-1}{120 X^4}.
    \end{align*}
    Using this,  \eqref{eq:Taylor expansion of sum} becomes
    \begin{align*}
    \sum_{n=1}^\infty \frac{1}{n} \exp\left( -\frac{n}{X} \right) &\leq \log X -\frac{1}{2X}-\frac{1}{24X^2} + \frac{c_0}{X^3} + \frac{c_1}{X^4} +\frac{1}{X}  \\
    &=\log X +\frac{1}{2X}-\frac{1}{24X^2} + \frac{c_0}{X^3} + \frac{c_1}{X^4}. \qedhere
    \end{align*}
\end{proof}

\section{Bounding the Integral} \label{sec: section 3}

To aid in bounding the integral in \eqref{eq: main bound}, we make use of two previously proven bounds on the Riemann zeta function. 
\begin{lemma}\label{lem:bounds on 1/2}
    If $t \geq 200$, then
    \begin{equation}\label{eq:1/4bound}
        |\zeta(1/2+it)| \leq \frac{4 t^{1/4}}{(2\pi)^{1/4}}-2.08.
    \end{equation}
    Furthermore, for $t\geq 3$, we have
    \begin{equation}\label{eq:1/6bound}
        |\zeta(1/2+it)| \leq 0.618 t^{1/6} \log t.
    \end{equation}
\end{lemma}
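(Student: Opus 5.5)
The plan is to quote both estimates from the literature rather than derive them from scratch, and to check that the constants hold in the stated ranges.

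\textbf{The bound \eqref{eq:1/6bound}.} This is an explicit subconvexity bound of Weyl type. It comes from van der Corput exponential-sum estimates applied to the Riemann--Siegel main sum. Hiary first obtained such a bound with constant $0.63$ for $t\ge 3$. The constant $0.618$ is the refinement due to Hiary, Patel and Yang, which we also take to be valid for $t\geq 3$. We will simply invoke that result; reproving it is well outside the scope of this paper.

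\textbf{The bound \eqref{eq:1/4bound}.} This is the ``trivial'' convexity-strength bound, and we would verify it directly from the Riemann--Siegel formula with Gabcke's explicit remainder. Set $N=\lfloor \sqrt{t/(2\pi)}\rfloor$. Since $|\zeta(1/2+it)|=|Z(t)|$, the triangle inequality gives
\[
|\zeta(1/2+it)| \le 2\sum_{n\le N} n^{-1/2} + 0.127\left(\frac{t}{2\pi}\right)^{-1/4}.
\]
Next we use the Euler--Maclaurin bound
\[
\sum_{n\le N} n^{-1/2} \le 2\sqrt N + \zeta(1/2) + \frac{1}{2\sqrt N},
\]
where $\zeta(1/2)\approx -1.4604$, together with $\sqrt N\le (t/2\pi)^{1/4}$. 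These give
\[
|\zeta(1/2+it)| \le 4\left(\frac{t}{2\pi}\right)^{1/4} - 2.92 + \frac{1}{\sqrt N} + 0.127\left(\frac{t}{2\pi}\right)^{-1/4}.
\]

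For $t\ge 200$ we have $N\ge 5$ and $(t/2\pi)^{1/4}\ge 2.37$. So the last two terms are at most $0.45+0.054<0.84$. This yields the constant $-2.08$, with a little room to spare.

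\textbf{Main obstacle.} The only delicate point is bookkeeping in the second bound. We must make sure Gabcke's remainder estimate is quoted in the form valid for $t\ge 200$, and that the Euler--Maclaurin error sign is handled correctly. Both are routine. If they proved troublesome, we would fall back on a direct numerical check of the finite sums for small $N$.
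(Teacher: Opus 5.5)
Your handling of \eqref{eq:1/6bound} matches the paper, which simply quotes it from Hiary--Patel--Yang. For \eqref{eq:1/4bound} the paper also only cites Hiary's improved Riemann--Siegel--Lehman bound. You instead rederive it, and that derivation uses a misquoted input, so as written it has a gap.

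Gabcke's constant $0.127$ comes with $\tau^{-3/4}$, where $\tau=t/2\pi$. It bounds the error only \emph{after} the first correction term $(-1)^{N-1}\tau^{-1/4}\Psi(p)$ has been subtracted, where $p=\sqrt\tau-N$ and $\Psi(p)=\cos(2\pi(p^2-p-1/16))/\cos(2\pi p)$. Since $\Psi$ ranges between $\cos(3\pi/8)\approx0.383$ and $\cos(\pi/8)\approx0.924$, the full remainder $Z(t)-2\sum_{n\le N}n^{-1/2}\cos(\theta(t)-t\log n)$ is of size up to about $0.92\,\tau^{-1/4}$. Your inequality with $0.127\,\tau^{-1/4}$ is therefore false for large $t$.

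The correct input is $|R(t)|\le0.924\,\tau^{-1/4}+0.127\,\tau^{-3/4}$ for $t\ge200$. With it, your uniform bookkeeping no longer closes: $1/\sqrt5+0.924/2.375+0.127/2.375^{3}\approx0.846>0.841$, which yields only $4\tau^{1/4}-2.075$.

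The repair is short. The map $x\mapsto4x+1/x$ is increasing for $x\ge1/2$, and $1\le\sqrt N\le\tau^{1/4}$. So you get $4\sqrt N+1/\sqrt N\le4\tau^{1/4}+\tau^{-1/4}$, instead of estimating $1/\sqrt N$ separately. Using $\tau^{1/4}\ge2.375$, this gives, for $t\ge200$,
\[ |\zeta(1/2+it)|\le4\tau^{1/4}+2\zeta(1/2)+1.924\,\tau^{-1/4}+0.127\,\tau^{-3/4}\le4\tau^{1/4}-2.10, \]
which implies \eqref{eq:1/4bound}. Alternatively, treat $200\le t<72\pi$ separately: there $N=5$ and $4(\tau^{1/4}-\sqrt5)>0.5$ absorbs the loss, and for $N\ge6$ the uniform estimate gives about $-2.13$.

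Your Euler--Maclaurin inequality $\sum_{n\le N}n^{-1/2}\le2\sqrt N+\zeta(1/2)+\tfrac{1}{2\sqrt N}$, including its sign, is correct. With this fix your route makes the lemma self-contained modulo Gabcke's theorem and shows where a constant like $-2.08$ comes from. The paper's citation is shorter but gives no indication of that.
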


\begin{proof}
    The first bound \eqref{eq:1/4bound}, proved in \cite{Hiary2015}, is an improved version of the Riemann--Siegel--Lehman bound. The bound \eqref{eq:1/6bound} is proved  in \cite{HiaryPatelYang2022}, and is based on exponential sum techniques.
\end{proof}

While \eqref{eq:1/4bound} is essentially a trivial bound, the bound \eqref{eq:1/6bound} relies on exponential sums. The trivial bound is better for $t\lessapprox 5 \cdot 10^7$ while \eqref{eq:1/6bound} is better for larger $t$. 

As \eqref{eq:1/4bound} and \eqref{eq:1/6bound} are both of the form $|\zeta(1/2+it)|\leq c_2 t^a(\log t)^b-c_3$, we bound the integral in \eqref{eq: main bound} using an arbitrary bound on $|\zeta(1/2+it)|$ of this form.

For the duration of this paper, we fix $A_0 \coloneqq 200$.

\begin{lemma}\label{lem:genral integral}
    If there exist  $a,b,c_2,c_3 \in \mathbb R_{\geq0}$ with $a\leq 1$ such that
    \begin{equation}\label{eq:general zeta bound}
        |\zeta(1/2+iw)|\leq c_2 w^a(\log w)^b-c_3
    \end{equation}
    for all $w\geq 210$, then for all $t\geq 210$ we have
    \begin{align}
    \int_{- \infty}^{\infty}\left|\Gamma(-1/2+iu)\right| \cdot &\left|  \zeta(1/2+i(t+u))\right|du \leq  c_2\big(t^aI_1+I_2\big)+ I_3 + 2.27\cdot10^{-5},
    \end{align}
    where 
    \begin{align*}
    I_1 &\coloneqq \int_{A_0}^{\infty}\left| \Gamma(-1/2+iu)\right|  \log^b(2u) du,  \ \ \ \ \ \ \ \   I_2 \coloneqq \int_{A_0}^{\infty}\left| \Gamma(-1/2+iu)\right|    u^a \log^b(2u) du. \\
     I_3&\coloneqq\int_{A_0-t}^{\infty}\left| \Gamma(-1/2+iu)\right| \cdot \left| \zeta(1/2+i(t+u))\right|du.
    \end{align*}
\end{lemma}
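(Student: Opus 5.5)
We split the range of integration at $u = A_0 - t$ and at $u = A_0$, writing the integral as the sum of three pieces: $u \in (-\infty, A_0-t)$, $u\in [A_0-t, A_0]$, and $u\in(A_0,\infty)$. The middle piece together with the last piece is bounded above by $I_3$, since $I_3$ is exactly the integral over $[A_0-t,\infty)$ with no bound applied to $\zeta$. That is wasteful, though: the $c_2 t^a I_1 + c_2 I_2$ terms must come from the tail $u>A_0$. The intended decomposition is therefore the following. On $u\ge A_0$ we use \eqref{eq:general zeta bound}. On $[A_0-t, A_0]$ the integral is kept unbounded and is majorized by $I_3$, since the integrand is nonnegative. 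On $u< A_0-t$ we produce the constant $2.27\cdot 10^{-5}$.

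For the tail $u\ge A_0$ we have $w=t+u\ge 410\ge 210$, so $|\zeta(1/2+i(t+u))|\le c_2 (t+u)^a\log^b(t+u)$, dropping $-c_3\le 0$. Since $0\le a\le 1$, subadditivity gives $(t+u)^a\le t^a+u^a$. For the logarithm we want $\log(t+u)\le \log(2u)$, which holds only when $u\ge t$. This is the main obstacle. If $t$ is large, the range $A_0\le u\le t$ has $\log(t+u)\le\log 2t$, not $\log 2u$.

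To handle it, we note that the lemma's $I_3$ already covers $[A_0-t,\infty)$, which overlaps the tail. Presumably the authors bound the tail piece $u\ge A_0$ by the $c_2$ terms, and the remaining region $[A_0-t,A_0)$ by $I_3$ since it is a subset. For the logarithm on $A_0\le u<t$, I would first try to show that $\log^b(t+u)$ can be absorbed using the extreme decay $|\Gamma(-1/2+iu)|\asymp u^{-1}e^{-\pi u/2}$ for $u\ge 200$, which is about $e^{-314}$. Any polynomial-in-$t$ loss on $A_0\le u\le t$ is then astronomically small relative to the margin left in $I_1, I_2$. Concretely, on $A_0\le u\le t$ we bound $\log^b(t+u)\le \log^b(2t)\le$ a crude factor times $\log^b(2u)$. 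If this cannot be made exact, we fold the discrepancy into the constant term, since both are negligible. The safer route is to phrase it with $\log(t+u)\le \log t+\log(2u)$ and absorb everything into the $2.27\cdot10^{-5}$ slack via the exponential decay. I would verify numerically that the resulting extra contribution is far below $10^{-5}$.

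For the left piece $u<A_0-t\le -10$, the argument $t+u$ runs over $(-\infty, A_0)$. We use $|\zeta(1/2+iw)|=|\zeta(1/2-iw)|$ together with a crude bound. Convexity or the trivial bound $|\zeta(1/2+iw)|\le c(1+|w|)^{1/4}$, or even \eqref{eq:1/4bound} for $|w|\ge 200$ and a computed maximum on $|w|\le 200$, would work. We multiply by $|\Gamma(-1/2+iu)|\le C|u|^{-1}e^{-\pi|u|/2}$, valid by Stirling for $|u|\ge 10$. The integral $\int_{-\infty}^{A_0-t}$ is then at most
\[
\int_{t-A_0}^{\infty}\frac{C'}{v}\,e^{-\pi v/2}(1+v+t)^{1/4}\,dv .
\]
This is maximized at the smallest allowed $t=210$, where $v\ge 10$ gives roughly $e^{-5\pi}\approx 1.5\cdot 10^{-7}$ times moderate factors. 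A rigorous numerical evaluation yields the stated $2.27\cdot 10^{-5}$. Summing the three pieces gives the lemma.
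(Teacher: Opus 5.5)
\textbf{The flawed step.} Your bound of the right tail $u\ge A_0$ by $c_2(t^aI_1+I_2)$ does not work as written. You correctly note that $\log(t+u)\le\log(2u)$ needs $u\ge t$. Your repair, absorbing the excess into $2.27\cdot 10^{-5}$ using the decay of $\Gamma$, fails. For $b>0$ the excess from $A_0\le u\le t$ is roughly $c_2\,t^a\log^b t\int_{A_0}^{\infty}|\Gamma(-1/2+iu)|\,du$. This is tiny for moderate $t$ but unbounded as $t\to\infty$, and it is proportional to $c_2$, which the lemma leaves arbitrary. The constant $2.27\cdot10^{-5}$, by contrast, is absolute.

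The step is also unnecessary. As your own first paragraph says, $\int_{A_0-t}^{A_0}+\int_{A_0}^{\infty}$ is exactly $I_3$. Majorizing the middle piece by $I_3$ and then bounding the tail separately counts the tail twice. Nothing has to ``produce'' the $c_2$ terms: since $c_2,I_1,I_2\ge 0$, it suffices to show that the integral over $u<A_0-t$ is at most $2.27\cdot 10^{-5}$. In the paper, the term $c_2(t^aI_1+I_2)$ comes from the far left, $u\le -A_0-t$. There $|u|\ge t$ makes $\log|t+u|\le\log(2|u|)$ valid, the hypothesis is applied via conjugate symmetry, the range is enlarged to $(-\infty,-A_0]$, and the reflection $u\mapsto -u$ turns the integrals into $I_1$ and $I_2$.

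\textbf{Comparison once corrected.} With that correction your argument is complete, and it differs from the paper only on $u<A_0-t$. The paper splits this range into the far-left piece above and the piece $|t+u|\le A_0$. It bounds the latter by the value of $|\Gamma(-1/2+iu)|$ at $|u|=t-A_0\ge 10$ times a rigorous numerical value of $\int_{-200}^{200}|\zeta(1/2+i\omega)|\,d\omega$; that crude product is where $2.27\cdot 10^{-5}$ comes from.

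You treat the whole range at once, using $|\Gamma(-1/2+iu)|\le\sqrt{2\pi}\,e^{-\pi|u|/2}/|u|$, a rigorous sup of $|\zeta(1/2+iw)|$ on $|w|\le 200$, and \eqref{eq:1/4bound} beyond. Your majorant decreases in $t$, and at $t=210$ it is of order $10^{-7}$. So it lands well below $2.27\cdot10^{-5}$ rather than reproducing it.

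Your route therefore proves the inequality even without the $c_2(t^aI_1+I_2)$ term, and it never uses \eqref{eq:general zeta bound}. It thereby also avoids the paper's application of the hypothesis at heights $200\le|t+u|<210$, slightly outside the assumed range $w\ge 210$. The price is importing an explicit bound such as \eqref{eq:1/4bound} for the far left, whereas the paper's route needs only the hypothesis plus numerics on a compact range.
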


\begin{remark}
    Since our choice of $A_0$ is sufficiently large, the variable $u$ in the integrals $I_1$ and $I_2$ is far enough from the origin that the exponential decay of the gamma function causes their value to be small. As such, the effect of these integrals on our error is minimal. Most of our error will arise from $I_3$, which is large since its range contains the origin. In an attempt to maximize our savings we bound $I_3$ twice, using both bounds of $|\zeta(1/2+it)|$ found in Lemma \ref{lem:bounds on 1/2}. The bound in \eqref{eq:1/6bound} will give better error for large $t$, while \eqref{eq:1/4bound} will be more effective for small values of $t$. 
\end{remark}

\begin{proof}[Proof of Lemma \ref{lem:genral integral}]
    Due to the conjugate symmetry of the zeta function, it follows that the bound $|\zeta(1/2-iw)|\leq c_2 w^a\log^b w-c_3$ holds for all $w\geq 210$. As such, we define
    \begin{align*}
    I_4 &\coloneqq \int_{-\infty}^{-A_0-t}\left| \Gamma(-1/2+iu)\right| \cdot \left| \zeta(1/2+i(t+u))\right|du, \\
    I_5 &\coloneqq\int_{-A_0-t}^{A_0-t}\left| \Gamma(-1/2+iu)\right| \cdot \left| \zeta(1/2+i(t+u))\right|du,
    \end{align*}
    and split the integral
    \begin{align}\label{eq:splitting the integral}
         \int_{- \infty}^{\infty}\left|\Gamma(-1/2+iu)\right| \cdot \left|  \zeta(1/2+i(t+u))\right|du=& I_3+I_4+I_5.
    \end{align}
    We bound $I_4$ and $I_5$ separately. 

    First, consider $I_4$. Using our bound \eqref{eq:general zeta bound}, we have
    \[
    I_4 \leq I_6 - I_7
    \]
    where
    \begin{align*}
        I_6 &\coloneqq c_2\int_{-\infty}^{-A_0-t}\left| \Gamma(-1/2+iu)\right| \cdot  |t+u|^a \log^b|t+u| du \\
        I_7 &\coloneqq c_3\int_{-\infty}^{-A_0-t}\left| \Gamma(-1/2+iu)\right| du.
    \end{align*}
    Consider $I_6$. Then $t \leq |u|$, so we can bound
    \[
    \log|t+u|\leq \log(t+|u|) \leq \log(2|u|).
    \]
    Furthermore, since $a \leq 1$, we have $|t+u|^a \leq t^a + |u|^a$. Thus we have
    \begin{align*}
    I_6 &\leq c_2\int_{-\infty}^{-A_0-t}\left| \Gamma(-1/2+iu)\right|  \left(t^a + |u|^a\right) \log^b(2|u|) du \\
    &= c_2t^a\int_{-\infty}^{-A_0-t}\left| \Gamma(-1/2+iu)\right|  \log^b(2|u|) du + c_2\int_{-\infty}^{-A_0-t}\left| \Gamma(-1/2+iu)\right|    |u|^a \log^b(2|u|) du.
    \end{align*}
    To eliminate the $t$-dependence in the bounds of integration, we can bound $-A_0-t \leq -A_0$ and write
    \begin{align*}
    I_6 &\leq c_2t^a\int_{-\infty}^{-A_0}\left| \Gamma(-1/2+iu)\right|  \log^b(2|u|) du + c_2\int_{-\infty}^{-A_0}\left| \Gamma(-1/2+iu)\right|    |u|^a \log^b(2|u|) du.
    \end{align*}
    However, due to the conjugate symmetry of the gamma function, these integrals are exactly $I_1$ and $I_2$, so we have 
    \[
    I_6 \leq c_2\big(t^aI_1+I_2\big).
    \]
    For $I_7$, since its integrand and $c_3$ are nonnegative, we can simply bound $I_7 \geq 0$. Thus we have
    \begin{equation}\label{eq:lower integral bound}
    I_4 \leq I_6-I_7 \leq  c_2\big(t^aI_1+I_2\big).
    \end{equation}

    Next consider $I_5$. Letting $\omega=t+u$, it becomes
    \[
    I_5=\int_{-A_0}^{A_0}|\Gamma(-1/2+i(\omega-t))|\cdot|\zeta(1/2+i\omega)|d\omega.
    \]
    Noting the relation \cite[5.4.4]{NIST:DLMF}
    \begin{equation}\label{eq: Gamma bound}
    |\Gamma(1/2 + iy)|^2 = \frac\pi{\cosh(\pi y)},
    \end{equation}
    we use the functional equation of the gamma function to find
    \[
    |\Gamma(-1/2+i(\omega-t))| = \Bigg[ \frac{\pi}{\cosh(\pi(\omega-t))} \frac{1}{\frac{1}{4}+(\omega-t)^2} \Bigg]^{1/2}.
    \]
    Therefore we have
    \begin{align*}
    I_5 &= \int_{-A_0}^{A_0} \Bigg[ \frac{\pi}{\cosh(\pi(\omega-t))} \frac{1}{\frac{1}{4}+(\omega-t)^2} \Bigg]^{1/2}  |\zeta(1/2+i\omega)|d\omega \\
    &\leq \Bigg[ \frac{\pi}{\cosh(\pi(t-A_0))} \frac{1}{\frac{1}{4}+(t-A_0)^2} \Bigg]^{1/2} \int_{-A_0}^{A_0}|\zeta(1/2+i\omega)|d\omega.
    \end{align*}
    Bounding this trivially (using $t \geq 210$) and evaluating the integral numerically in \cite{ChristensenPratt2026} gives 
    \begin{equation}\label{eq:bound on middle integral}
        I_5 \leq2.27\cdot10^{-5}.
    \end{equation}

    Combining our results from \eqref{eq:splitting the integral}, \eqref{eq:lower integral bound}, and \eqref{eq:bound on middle integral} we arrive at the desired expression:
    \begin{align*}
    \int_{- \infty}^{\infty}\left|\Gamma(-1/2+iu)\right| \cdot \left|  \zeta(1/2+i(t+u))\right|du &\leq c_2\big(t^aI_1+I_2\big)+ I_3 + 2.27\cdot10^{-5}.\qedhere
    \end{align*}
\end{proof}

The integrals $I_1$ and $I_2$ can both be evaluated numerically, with the tail bounded analytically for various choices of $a$ and $b$. These bounds are somewhat tedious and uninteresting so we do not give all the details. However, we will demonstrate a bound of the tail of $I_1$ with $b=0$ and note that we can bound the others quite similarly. 
\begin{lemma}\label{lem: bounding tail}
    If $b=0$, then
    \[
    I_1=\int_{A_0}^{\infty} |\Gamma(-1/2+iu)|du\leq 10^{-133}.
    \]
\end{lemma}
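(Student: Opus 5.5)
We will bound the integrand pointwise using the explicit modulus of the gamma function already recorded in the proof of Lemma \ref{lem:genral integral}. By \eqref{eq: Gamma bound} and the functional equation $\Gamma(1/2+iu) = (-1/2+iu)\Gamma(-1/2+iu)$, we have
\[
|\Gamma(-1/2+iu)| = \left[\frac{\pi}{\cosh(\pi u)}\cdot\frac{1}{\frac14+u^2}\right]^{1/2}.
\]
Next we use $\cosh(\pi u)\geq \tfrac12 e^{\pi u}$ together with $\tfrac14+u^2\geq u^2 \geq A_0^2$ for $u\geq A_0$. This gives the clean majorant
\[
|\Gamma(-1/2+iu)| \leq \frac{\sqrt{2\pi}}{A_0}\, e^{-\pi u/2} \qquad (u\geq A_0).
\]

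Integrating this exponential explicitly, we get
\[
I_1 \leq \frac{\sqrt{2\pi}}{A_0}\int_{A_0}^\infty e^{-\pi u/2}\,du = \frac{2\sqrt{2\pi}}{\pi A_0}\, e^{-\pi A_0/2}.
\]
With $A_0=200$ the exponent is $100\pi\approx 314.16$. Hence $e^{-100\pi}\approx 10^{-136.4}$. The prefactor $2\sqrt{2\pi}/(200\pi)$ is about $8\cdot 10^{-3}$, so the total is roughly $10^{-138.5}$, comfortably below $10^{-133}$.

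The only step needing care is the numerical check that $e^{-100\pi}$ is smaller than $10^{-136}$. This follows from $100\pi\log_{10} e > 314\cdot 0.434 > 136$. Everything else is elementary inequalities, so there is no real obstacle. We do not even need the numerical integration mentioned before the lemma, since the analytic tail bound already covers the whole range $[A_0,\infty)$.
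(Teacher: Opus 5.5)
Your proof is correct, but it takes a different and more elementary route than the paper.

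The paper splits $[A_0,\infty)$ at $B_0=1000$. It bounds $\int_{A_0}^{B_0}|\Gamma(-1/2+iu)|\,du\le 10^{-134}$ by rigorous ball arithmetic and treats only $[B_0,\infty)$ analytically. Even there it discards the decay of $1/|-1/2+iu|$ through the crude bound $1/|-1/2+iu|\le 2$, which gives $\frac{4\sqrt2}{\sqrt\pi}e^{-500\pi}\le 10^{-681}$.

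You instead keep the factor $1/|-1/2+iu|\le 1/A_0$ and use the majorant $\sqrt{2\pi}\,e^{-\pi u/2}/A_0$ on the whole range. This yields $I_1\le \frac{2\sqrt{2\pi}}{200\pi}e^{-100\pi}\approx 10^{-138.5}$ with no computer verification. Your check $100\pi\log_{10}e>314\cdot 0.434>136$, together with the prefactor being less than $1$, makes this rigorous.

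Your bound is sharper, essentially the true size of the integral, and it shows the numerical step is superfluous for this lemma. In fact, even the paper's own crude tail estimate applied directly from $A_0=200$ would give $\frac{4\sqrt2}{\sqrt\pi}e^{-100\pi}<10^{-135}$.

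What the paper's split buys is uniformity of method. The same ``compact piece by ball arithmetic plus crude analytic tail'' template is reused for $I_2$, for $I_1$ with $b=1$, and for the whole-line integrals in the bounds on $I_3$. In those last integrals the integrand is not small near the origin, so numerics are genuinely needed. Your approach would also extend to the weighted integrals $I_1,I_2$ with little effort: for the parameters used, $u^{a-1}\log^b(2u)$ is decreasing on $[A_0,\infty)$ and can be bounded by its value at $A_0$.
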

\begin{proof}
    We can split $I_1$ into two regions:
    \[
    I_1=\int_{A_0}^{\infty} |\Gamma(-1/2+iu)|du = \int_{A_0}^{B_0} |\Gamma(-1/2+iu)|du+\int_{B_0}^{\infty} |\Gamma(-1/2+iu)|du.
    \]
    Taking $B_0 = 1000$, we bound the first integral using complex ball arithmetic in \cite{ChristensenPratt2026} and find
    \[
    \int_{A_0}^{B_0} |\Gamma(-1/2+iu)|du \leq 10^{-134}.
    \]
    For the tail, we can use the functional equation of the gamma function along with \eqref{eq: Gamma bound} to write
    \[
    I_1'\coloneqq\int_{B_0}^{\infty} |\Gamma(-1/2  + iu)|du = \int_{B_0}^{\infty}  \frac{|\Gamma(1/2+iu)|}{|-1/2 + iu|}du = \int_{B_0}^{\infty}  \frac{\pi^{1/2}}{|-1/2 + iu| \sqrt{\cosh(\pi u)}}du.
    \]
    We then quickly bound the resulting integral in as follows:
    \begin{align*}
        I_1' &\leq 2\sqrt{2\pi}\int_{B_0}^{\infty}  \frac{du}{\left(e^{\pi u} + e^{-\pi u}  \right)^{1/2}} \leq 2\sqrt{2\pi} \int_{B_0}^{\infty}  e^{-\pi u/2}du = \frac {4\sqrt 2}{\sqrt\pi}e^{-B_0\pi/2} \leq 10^{-681}.
    \end{align*}
    With this, we find
    \[
    I_1 \leq 10^{-134}+10^{-681} \leq 10^{-133}.\qedhere
    \]
\end{proof}

    We can similarly bound  both $I_1$ and $I_2$ for various values of $a$ and $b$. However, the integral $I_3$ still has some $t$ dependence. Since $I_3$ provides the largest contribution to our eventual error term, we treat it carefully.  In the following two lemmas, we provide two different bounds on $I_3$, the first using the bound in \eqref{eq:1/4bound} and the second using the bound in \eqref{eq:1/6bound}.

\begin{lemma}\label{lem: 1/2 upper integral}
    If $t\geq 210$, then
    \begin{equation}\label{eq: 1/2 upper integral bound}
        I_3 \leq 4.0315 c_2 t^{1/4} + 0.4618c_2t^{-3/4}-4.0314 c_3,
    \end{equation}
    where $c_2 = 4/(2\pi)^{1/4}$ and $c_3 = 2.08$.
\end{lemma}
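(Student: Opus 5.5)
In $I_3$ we have $u\ge A_0-t$, so $w=t+u\ge A_0=200$. In this range \eqref{eq:1/4bound} holds, giving $|\zeta(1/2+i(t+u))|\le c_2 (t+u)^{1/4}-c_3$. Substituting this yields
\[
I_3\le c_2\int_{A_0-t}^{\infty}|\Gamma(-1/2+iu)|(t+u)^{1/4}\,du-c_3\int_{A_0-t}^{\infty}|\Gamma(-1/2+iu)|\,du .
\]
Each term is then handled separately: the main term $c_2t^{1/4}$, the correction $c_2t^{-3/4}$, and the subtracted $c_3$ term. Throughout write $G(u)=|\Gamma(-1/2+iu)|$, which is even in $u$ by conjugate symmetry.

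\textbf{The $c_3$ term.} Here we need a lower bound on $\int_{A_0-t}^\infty G$. This integral is at least $\int_{-\infty}^\infty G-\int_{-\infty}^{A_0-t}G$. Since $t\ge 210$, we have $A_0-t\le -10$, and the tail $\int_{10}^\infty G$ is at most about $e^{-5\pi}$, which is negligible. The full integral $\int_{\mathbb R}G$ is computed numerically, presumably about $4.0315$, using the same ball-arithmetic machinery as in Lemma \ref{lem: bounding tail} together with the analytic tail bound via \eqref{eq: Gamma bound}. Rounding down gives the coefficient $4.0314$ on $-c_3$.

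\textbf{The $c_2$ term: expanding $(t+u)^{1/4}$.} Write $(t+u)^{1/4}=t^{1/4}(1+u/t)^{1/4}$ and use concavity. The tangent-line bound $(1+x)^{1/4}\le 1+x/4$ holds for all $x\ge -1$ and applies since $t+u\ge 200>0$. This gives
\[
(t+u)^{1/4}\le t^{1/4}+\tfrac14 u\,t^{-3/4}.
\]
Integrating against $G$ over $[A_0-t,\infty)$, the first piece is at most $t^{1/4}\int_{\mathbb R}G\le 4.0315\,t^{1/4}$. For the second piece, the integral $\int_{A_0-t}^\infty uG(u)\,du$ is bounded by $\int_0^\infty uG$, because the contribution from negative $u$ is nonpositive. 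Computing $\int_0^\infty uG(u)\,du$ numerically should give roughly $1.847$, and $\tfrac14\cdot 1.847\approx 0.4618$, matching the stated constant.

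If the constant does not match, the fallback is to bound $\tfrac14\int_0^\infty|u|G$ directly. A second-order alternative would use $(1+x)^{1/4}\le 1+x/4$ and drop the odd part by symmetry, but the truncation at $A_0-t$ breaks exact symmetry. For that reason I prefer bounding by the positive half, which matches the constant $0.4618$ appearing in the statement.

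\textbf{Main obstacle.} The analysis is routine; the hard part is obtaining rigorous numerical values for $\int_{\mathbb R}G$ and $\int_0^\infty uG$. Each integral is split at a cutoff such as $B_0=1000$ as in Lemma \ref{lem: bounding tail}. The finite part is evaluated with interval arithmetic, and the tail is bounded by $\int_{B_0}^\infty 2\sqrt{2\pi}\,u\,e^{-\pi u/2}\,du$, which is astronomically small. The remaining care is in the rounding: the $t^{1/4}$ coefficient must be rounded up, while the coefficient $4.0314$ multiplying $c_3$ must be rounded down, after accounting for the discarded tail below $A_0-t$.
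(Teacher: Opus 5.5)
Your proof is correct and follows the paper's route. Both use the pointwise bound from \eqref{eq:1/4bound} on $t+u\ge A_0$, the tangent-line inequality $(1+u/t)^{1/4}\le 1+u/(4t)$, extension of the integrals to remove the $t$-dependence, and a lower bound for the $c_3$-integral that uses $A_0-t\le -10$ to make the discarded tail negligible.

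The only difference is in the $t^{-3/4}$ term, and there your numerical expectation is off by a factor of two.

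\begin{itemize}
\item The paper bounds $u\le |u|$ and integrates over all of $\mathbb R$. So $0.4618$ is a rounding up of $\tfrac14\int_{\mathbb R}|u|\,|\Gamma(-1/2+iu)|\,du\approx \tfrac14(1.8469)$.
\item Your half-line integral $\int_0^\infty u\,|\Gamma(-1/2+iu)|\,du$ is exactly half of $\int_{\mathbb R}|u|\,|\Gamma(-1/2+iu)|\,du$, so it is about $0.923$, not $1.847$.
\item Your route therefore yields roughly $0.231\,c_2t^{-3/4}$. This is stronger than the stated bound and implies it a fortiori, so the argument is fine.
\item Your remark that this ``matches'' $0.4618$ is wrong, however. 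Your proposed fallback is the same integral, so it would not change anything.
\end{itemize}

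Your worry that truncation breaks the symmetry argument is also unfounded. Since $u\,|\Gamma(-1/2+iu)|$ is odd and integrable,
$\int_{A_0-t}^\infty u\,|\Gamma(-1/2+iu)|\,du=\int_{t-A_0}^\infty u\,|\Gamma(-1/2+iu)|\,du\le\int_{10}^\infty u\,|\Gamma(-1/2+iu)|\,du.$
The last integral is at most about $2.5\cdot 10^{-7}$. So the $t^{-3/4}$ contribution is in fact negligible.
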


\begin{proof}

    Since $t+u \geq t+(A_0-t) = A_0 = 200$, Lemma \ref{lem:bounds on 1/2} yields
    \[
    \left| \zeta(1/2+i(t+u))\right| \leq c_2(t+u)^{1/4}-c_3.
    \]
    It follows that
    \[
    I_3 \leq c_2\int_{A_0-t}^\infty |\Gamma(-1/2+iu)|(t+u)^{1/4}du - c_3 \int_{A_0-t}^\infty |\Gamma(-1/2+iu)|du.
    \]
    We define 
    \begin{align*}
        I_8 &\coloneqq c_2\int_{A_0-t}^\infty |\Gamma(-1/2+iu)|(t+u)^{1/4}du, \\
        I_9 &\coloneqq  c_3 \int_{A_0-t}^\infty |\Gamma(-1/2+iu)|du,
    \end{align*}
    so that $I_3 \leq I_8- I_9$. 
    
    We first examine $I_8$. Pulling out a factor of $t^{1/4}$ gives
    \[
    I_8 = c_2t^{1/4}\int_{A_0-t}^\infty |\Gamma(-1/2+iu)|\left(1+\frac ut\right)^{1/4}du.
    \]
    Since $u/t\geq -1$, we see by concavity that $(1+u/t)^{1/4}\leq 1+u/4t$. Inserting this bound and expanding the integral gives
    \begin{align*}
        I_8 \leq& c_2t^{1/4}\int_{A_0-t}^\infty |\Gamma(-1/2+iu)|du+\frac{c_2}{4t^{3/4}}\int_{A_0-t}^\infty |\Gamma(-1/2+iu)| u  du.
    \end{align*}
    We obtain an upper bound on the integrals, and simultaneously remove their $t$-dependence, by extending the integrals to the entire real line. Hence
    \begin{align*}
        I_8 \leq& c_2 t^{1/4}\int_{-\infty}^\infty |\Gamma(-1/2+iu)|du + \frac{c_2}{4 t^{3/4}}\int_{-\infty}^\infty |\Gamma(-1/2+iu)||u|du.
    \end{align*}
    Rigorously integrating these numerically using SageMath in \cite{ChristensenPratt2026} and bounding the tail  (similarly to the proof of Lemma \ref{lem: bounding tail}),  we find
    \[
    I_8\leq 4.0315 c_2 t^{1/4} + 0.4618c_2t^{-3/4}.
    \]
    Similarly we get a lower bound $I_9 \geq 4.0314 c_3$, and we arrive at the desired conclusion.
\end{proof}

\begin{lemma}\label{lem: 1/3 upper integral}
    If $t\geq 210$ and $c_2 = 0.618$, then 
    \begin{align}\label{eq: 1/3 upper integral bound}
        I_3\leq c_2 \Bigg[ 4.0315 t^{1/6} \log t + 1.8469 \  t^{-5/6} + 3.3032 \log t + 1.7493 \  t^{-1}   \Bigg].
    \end{align}
\end{lemma}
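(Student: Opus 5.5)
We follow the template of the proof of Lemma \ref{lem: 1/2 upper integral}, now using \eqref{eq:1/6bound}. On the range of $I_3$ we have $t+u\geq A_0=200\geq 3$, so \eqref{eq:1/6bound} gives
\[
I_3\leq c_2\int_{A_0-t}^\infty |\Gamma(-1/2+iu)|\,(t+u)^{1/6}\log(t+u)\,du .
\]
Since \eqref{eq:1/6bound} has no subtracted constant, there is no analogue of $I_9$; we only need an upper bound on this single integral. Pulling out the main terms, we write
\[
(t+u)^{1/6}\log(t+u)=t^{1/6}\Big(1+\frac ut\Big)^{1/6}\Big(\log t+\log\Big(1+\frac ut\Big)\Big).
\]

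Next we linearize using concavity on $u/t\geq -1$. We use $(1+x)^{1/6}\leq 1+x/6$, and for the logarithm $\log(1+x)\leq x$. Expanding the product gives a main term $t^{1/6}\log t$ and cross terms involving $(u/t)\log t$, $u/t$, and $(u/t)^2$. Each cross term gains a power of $t^{-1}$ relative to $t^{1/6}$. After this we extend every integral to the whole real line, inserting $|u|$ and $u^2$ as needed to get upper bounds independent of $t$. The constant $4.0315=\int_{\mathbb R}|\Gamma(-1/2+iu)|du$ (rounded up) then reappears as the coefficient of $t^{1/6}\log t$, exactly as in Lemma \ref{lem: 1/2 upper integral}.

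The matching to the stated constants is the step I expect to be delicate, and some of it is a guess. The coefficient $1.8469$ on $t^{-5/6}$ likely comes from the moment $\int|\Gamma(-1/2+iu)|\,|u|\,du\approx 1.8472$ (note $4\cdot0.4618$), together with the $\log$-term bound $u/t$. The $t^{-1}$ term, $1.7493$, likely comes from $\int|\Gamma|u^2\,du$ scaled by $1/6$. I do not see how a crude product expansion yields the term $3.3032\log t$, which carries no decaying power of $t$. This suggests the authors bound the piece where $u$ is small relative to $t$ differently.

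The likely device is to separate the factor $\log(t+u)$ rather than expand it. On $u\geq 0$ one can use $\log(t+u)\leq \log t+\log(1+u)$. On the negative range $A_0-t\leq u<0$ one can use $\log(t+u)\leq\log t$. Then $(t+u)^{1/6}\log(1+|u|)$ produces something like $t^{1/6}\int|\Gamma|\log(1+|u|)$, which still does not match. So I would instead try bounding $(t+u)^{1/6}\leq t^{1/6}+|u|^{1/6}$, as in the $a\leq 1$ step of Lemma \ref{lem:genral integral}. That gives $\log t\int|\Gamma||u|^{1/6}$, a constant times $\log t$, which is plausibly $3.3032$.

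The concrete plan is therefore as follows. First, use $(t+u)^{1/6}\leq t^{1/6}(1+u/6t)$ for the main piece, multiplied against the bound $\log(t+u)\leq \log t+u/t$. Collect the $t^{1/6}\log t$, $t^{-5/6}$, and $t^{-1}$ terms, and absorb any remaining pieces into a $\log t$ term whose coefficient is a rigorously computed moment of $|\Gamma(-1/2+iu)|$. The main obstacle is choosing the splitting so that the numerical moments match the stated constants. Those moments are computed by ball arithmetic in SageMath, with tails handled as in Lemma \ref{lem: bounding tail}.
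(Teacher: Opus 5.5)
Your concrete plan works, but it takes a different route from the paper. The ``matching'' you worry about is also unnecessary: the lemma is only an upper bound, so any bound that is termwise smaller for $t\ge 210$ proves it.

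Write $M_k\coloneqq\int_{\mathbb R}|\Gamma(-1/2+iu)|\,|u|^k\,du$. The paper uses subadditivity, $(t+u)^{1/6}\le t^{1/6}+|u|^{1/6}$, which is the same step as for $I_6$ in Lemma \ref{lem:genral integral}. It combines this with $\log(t+u)\le \log t+|u|/t$, so its four constants are $M_0$, $M_1$, $M_{1/6}$ and $M_{7/6}$. Your guess that $3.3032$ is $M_{1/6}$ is right. However, $1.7493$ is $M_{7/6}$, coming from the cross term $|u|^{1/6}\cdot |u|/t$, not $M_2/6$. In your expansion the $u^2$ term carries $t^{-11/6}$, not $t^{-1}$.

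Your route multiplies $(1+u/t)^{1/6}\le 1+u/(6t)$ by $\log(t+u)\le\log t+u/t$. This is legitimate because $\log(t+u)>0$ and $1+u/(6t)>0$. It gives
\[
I_3\le c_2\Big[M_0t^{1/6}\log t+M_1t^{-5/6}+\tfrac{M_1}{6}\,t^{-5/6}\log t+\tfrac{M_2}{6}\,t^{-11/6}\Big].
\]
Here $M_2\le 8\sqrt{2\pi}/\pi^2<2.04$ follows by hand from $\cosh(\pi u)\ge e^{\pi|u|}/2$ and $u^2/\sqrt{1/4+u^2}\le |u|$. Since $t^{-5/6}<0.012$ for $t\ge210$, the last two terms are far below $3.3032\log t$ and $1.7493\,t^{-1}$, which finishes the proof.

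Comparing the two: the paper's subadditivity step is cruder. It leaves a non-decaying $\log t$ term, and it needs fractional moments that require separate treatment near $u=0$. Your concavity step, which the paper itself uses for the $1/4$-power in Lemma \ref{lem: 1/2 upper integral}, needs only integer moments and is strictly sharper. In Section \ref{sec: proof of theorem}, the $c_8\log t$ term contributes about $0.076$ to the final constant at $t=10^8$. Your version of the lemma would therefore improve the $-1.16$ in Theorem \ref{thm: main} to roughly $-1.23$.
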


\begin{proof}
Since $u \geq A_0-t$, Lemma \ref{lem:bounds on 1/2} gives
\[
|\zeta(1/2+i(t+u))| \leq c_2 (t+u)^{1/6} \log (t+u).
\]
Therefore, we have
\[
I_3 \leq c_2\int_{A_0-t}^{\infty}|\Gamma(-1/2 + iu)| (t+u)^{1/6}\log(t+u)du.
\]
Since $u \geq A_0-t > -t$, we have $\frac{u}{t} > -1$, and therefore
\[
\log(t+u) =  \log t + \log\left(1+\frac ut\right) \leq \log t + \frac{|u|}{t}.
\]
We also have the upper bound
\[
(t+u)^{1/6} \leq t^{1/6} + |u|^{1/6}.
\]
Inserting both of these bounds gives
\[
I_3 \leq c_2 \int_{A_0-t}^{\infty}|\Gamma(-1/2  +iu)| (t^{1/6} + |u|^{1/6})\left( \log t + \frac {|u|}t\right) du .
\]
Extending the range of integration to the whole real line and expanding gives
\begin{align*}
    I_3 \leq&c_2 \Bigg[ t^{1/6} \log t \int_{-\infty}^\infty |\Gamma(-1/2 +iu)| du  + t^{-5/6} \int_{-\infty}^\infty |\Gamma(-1/2 +iu)|  |u| du  \\
    &  + \log t \int_{-\infty}^\infty |\Gamma(-1/2 +iu)||u|^{1/6} du + t^{-1}\int_{-\infty}^\infty |\Gamma(-1/2 +iu)| |u|^{7/6} du    \Bigg].
\end{align*}
The first two integrals we can bound the tails and rigorously integrate using SageMath in the usual way. For the later two integrals, since $|u|^{1/6}$ and $|u|^{7/6}$ are not holomorphic at $u=0$, we must also analytically bound the integral in a very small neighborhood of the origin before performing numerical integration.  Performing the computations in \cite{ChristensenPratt2026}, we arrive at 
\[
I_3\leq c_2 \Bigg[ 4.0315 t^{1/6} \log t + 1.8469 \  t^{-5/6} + 3.3032 \log t + 1.7493 \  t^{-1}   \Bigg]. \qedhere
\]
\end{proof}

\section{Proof of Theorem 1.1} \label{sec: proof of theorem}

\begin{proof}[Proof of Theorem \ref{thm: main}]
    With bounds on $I_3$, we can now directly bound the integral term in \eqref{eq: main bound}. We apply Lemma \ref{lem:genral integral} to \eqref{eq:1/4bound} and \eqref{eq:1/6bound} to arrive at two distinct bounds of $|\zeta(1+it)|$. We begin with the bound in \eqref{eq:1/4bound}. That is, setting 
    \[
    a=\frac 14, \ \ b = 0,  \ \ c_2 = \frac 4{(2\pi)^{1/4}}, \ \ c_3= 2.08,
    \]
    Lemma \ref{lem:genral integral} gives
    \begin{align*}
         I &\coloneqq \int_{- \infty}^{\infty}\left|\Gamma(-1/2+iu)\right| \cdot \left|  \zeta(1/2+i(t+u))\right|du \leq c_2\big(t^{1/4}I_1+I_2\big)+ I_3 + 2.27\cdot10^{-5}.
    \end{align*}
    With these parameters, we numerically bound $I_1$ and $I_2$ in \cite{ChristensenPratt2026} and find
    \[
    I_1 \leq 10^{-134}, \ \ \ I_2 \leq 10^{-131}.
    \]
    Furthermore, Lemma \ref{lem: 1/2 upper integral} gives
    \[
    I_3  \leq 4.0315 c_2 t^{1/4} + 0.4618c_2t^{-3/4}-4.0314 c_3.
    \]
    Hence defining
    \[
    c_4\coloneqq 10.18549 , \ \ \ c_5\coloneqq1.16673, \ \ \ c_6\coloneqq 8.38527,
    \]
    we have 
    \begin{align*}
        I  &\leq 4.031501\ c_2t^{1/4} + 0.4618c_2t^{-3/4}+ c_2 \cdot 10^{-131}   -4.0314 c_3 + 2.27 \cdot 10^{-5} \leq  c_4 t^{1/4} + c_5 t^{-3/4} - c_6.
    \end{align*}
    This along with Lemmas \ref{lem: main bound} and \ref{lem: sum term bound} allows us to write
    \begin{align*}
        |\zeta(1+it)|&\leq \log X +\frac{1}{2X}-\frac{1}{24X^2} + \frac{c_0}{X^3} + \frac{c_1}{X^4} + |\Gamma(-it)| + \frac{X^{-1/2}}{2\pi } \left(c_4 t^{1/4} + c_5 t^{-3/4} - c_6\right) \\
        &\coloneqq \log X  + \mathcal R_1(X).
    \end{align*}
    The two largest terms in this bound are $\log X$ and $\frac{c_4}{2\pi}\frac{t^{1/4}}{X^{1/2}}$. As such, we minimize the sum of these terms by choosing
    \[
    X \coloneqq \frac{c_4^2 t^{1/2}}{(4\pi)^2}.
    \]
    Thus we have
    \begin{align*}
        |\zeta(1+it)| & \leq \frac 12 \log t  +\log \left( \frac{c_4^2}{(4\pi)^2}\right) + \mathcal R_1\left(\frac{c_4^2 t^{1/2}}{(4\pi)^2} \right),
    \end{align*}
    say. Since
    \[
    \log \left( \frac{c_4^2}{(4\pi)^2}\right) + \mathcal R_1\left(\frac{c_4^2 t^{1/2}}{(4\pi)^2} \right)
    \]
    is increasing on $t\geq 10$, we bound the expression trivially for $210 \leq t \leq 5 \cdot 10^8$ in \cite{ChristensenPratt2026} to find
    \begin{equation}\label{eq: 1/2 log bound}
    |\zeta(1+it)| \leq \frac 12 \log t + 1.57.
    \end{equation}

    Similarly, we can bound the integral term using the bound given in \eqref{eq:1/6bound}. Here we have
    \[
    a= \frac 16, \ \ \ b=1, \ \ \ c_2 = 0.618, \ \ \ c_3=0.
    \]
    With these parameters, we numerically bound $I_1$ and $I_2$ in \cite{ChristensenPratt2026} as follows:
    \[
    I_1\leq 10^{-133}, \ \ \ I_2 \leq 10^{-130}.
    \]
    Furthermore, Lemma \ref{lem: 1/3 upper integral} gives
    \[
    I_3\leq c_2 \Bigg[ 4.0315 t^{1/6} \log t + 1.8469 \  t^{-5/6} + 3.3032 \log t + 1.7493 \  t^{-1}   \Bigg].
    \]
    Thus if we define
    \begin{align*}
    c_7&\coloneqq 2.49147, & c_8 &\coloneqq  2.04138, & c_9&\coloneqq10^{-133}, \\
    c_{10}&\coloneqq1.14139, & c_{11}&\coloneqq 1.08107, & c_{12}&\coloneqq2.271\cdot10^{-5},
    \end{align*}
    we have
    \begin{align*}
        I &\leq  4.0315 c_2t^{1/6} \log t +3.3032c_2 \log t + c_2 10^{-133} t^{1/6} \\
        & \ \ \ \ + 1.8469  \ c_2 t^{-5/6}  + 1.7493 \  c_2t^{-1} + c_2 10^{-130} + 2.27 \cdot 10^{-5} \\
        &\leq  c_7t^{1/6} \log t + c_8 \log t + c_9t^{1/6} + c_{10}t^{-5/6} + c_{11}t^{-1} + c_{12}.
    \end{align*}
    This along with Lemmas \ref{lem: main bound} and \ref{lem: sum term bound} allows us to write 
    \begin{align*}
        |\zeta(1+it)|&\leq \log X +\frac{1}{2X}-\frac{1}{24X^2} + \frac{c_0}{X^3} + \frac{c_1}{X^4} + |\Gamma(-it)| \\
        & \ \ \ \ + \frac{X^{-1/2}}{2\pi } \left(c_7t^{1/6} \log t + c_8 \log t + c_9t^{1/6} + c_{10}t^{-5/6} + c_{11}t^{-1} + c_{12}\right).
    \end{align*}
    The two largest terms in this bound are $\log X$ and $\frac{c_7}{2\pi X^{1/2}}t^{1/6}\log t$. As such, we minimize the sum of these terms by choosing
    \[
    X \coloneqq \frac{c_7^2\  t^{1/3}\log^2t}{(4\pi)^2}.
    \]
    Thus we have
    \[
    |\zeta(1+it)| \leq \frac13\log t + 2\log\log t + \log\left(\frac{c_7^2}{(4\pi)^2}\right) + \mathcal R_2\left(\frac{c_7^2\  t^{1/3}\log^2t}{(4\pi)^2}\right),
    \]
    say. Since
    \[
    \log\left(\frac{c_7^2}{(4\pi)^2}\right) + \mathcal R_2\left(\frac{c_7^2\  t^{1/3}\log^2t}{(4\pi)^2}\right)
    \]
    is decreasing on $t \geq 10$,  for  $t \geq 10^8$ we trivially bound this expression in \cite{ChristensenPratt2026} to arrive at
    \begin{equation}\label{eq: 1/3 log bound}
    |\zeta(1+it)| \leq \frac13 \log t+2\log\log t-1.16.
    \end{equation}
    Noting that the bound in \eqref{eq: 1/2 log bound} is greater than the bound in \eqref{eq: 1/3 log bound} for all $t \geq 2 \cdot 10^8$, it follows that for all $t \geq 210$ 
    \[
    |\zeta(1+it)| \leq \frac 12 \log t + 1.57.
    \]
    Furthermore, we prove computationally in \cite{ChristensenPratt2026} that this bound holds for all $3 \leq t \leq 210$. Hence we have the desired bound.  \qedhere
\end{proof}

\section{Acknowledgments}
Both authors are partially supported by the National Science Foundation (DMS-2418328). The second author is also partially supported by the Simons Foundation (MPS-TSM-00007959).

\bibliographystyle{plain}
\bibliography{references}

\end{document}